\begin{document}

\newcommand{\mnote}[1]{\marginpar{\tiny\tt #1}}

\newcommand{\CC}{{\mathbb C}}
\newcommand{\RR}{{\mathbb R}}
\newcommand{\ZZ}{{\mathbb Z}}
\newcommand{\NN}{{\mathbb N}}
\newcommand{\K}{{\mathcal K}}

\renewcommand{\phi}{\varphi}
\newcommand{\halb}{\frac{1}{2}}
\renewcommand{\Re}{\operatorname{Re}}
\newcommand{\spec}{\operatorname{spec}}
\newcommand{\scal}{{\rm scal}}
\newcommand{\ric}{{\rm ric}}
\newcommand{\Ric}{{\rm Ric}}
\newcommand{\dV}{{\rm dV}}
\newcommand{\dA}{{\rm dA}}
\newcommand{\area}{{\rm area}}
\newcommand{\supp}{{\rm supp}}
\newcommand{\ind}{\operatorname{ind}}
\newcommand{\nE}{\nabla^E}
\newcommand{\<}{\langle}
\renewcommand{\>}{\rangle}
\newcommand{\spp}{\sigma_{\rm p}}
\newcommand{\scc}{\sigma_{\rm c}}
\newcommand{\sd}{\sigma_{\rm d}}
\newcommand{\se}{\sigma_{\rm e}}
\newcommand{\grad}{{\rm grad}}

\newcommand{\gl}[1]{(\ref{#1})}

\theoremstyle{plain}
\newtheorem{thm}{Theorem}[section]
\newtheorem{lemma}[thm]{Lemma}
\newtheorem{prop}[thm]{Proposition}
\newtheorem{cor}[thm]{Corollary}
\newtheorem{conjecture}[thm]{Conjecture}

\theoremstyle{definition}
\newtheorem{remark}[thm]{Remark}
\newtheorem{definition}[thm]{Definition}
\newtheorem{example}[thm]{Example}

\title{Spectral Bounds for Dirac Operators on Open Manifolds}
\author{Christian B\"ar}
\thanks{}
\subjclass[2000]{53C27}
\keywords{Dirac operators, point spectrum, continuous spectrum, discrete spectrum, essential spectrum, Killing spinor, Friedrich inequality, Lichnerowicz inequality}
\date{\today}
\address{Universit\"at Potsdam\\
Institut f\"ur Mathematik, Am Neuen Palais 10, Haus 8, Germany}
\email{baer@math.uni-potsdam.de}
\begin{abstract}
We extend several classical eigenvalue estimates for Dirac operators on compact manifolds to noncompact, even incomplete manifolds.
This includes Friedrich's estimate for manifolds with positive scalar curvature as well as the author's estimate on surfaces.
\end{abstract}
\maketitle

\section{Introduction}

This paper is concerned with spectral bounds for Dirac operators on noncompact Riemannian manifolds.
We work with generalized Dirac operators in the sense of Gromov and Lawson as explained in Section~\ref{sec:general}.
Examples are the classical Dirac operator acting on spinors, the operator $D=d+\delta$ acting on forms, and the operator $D=\sqrt{2}(\bar\partial + \bar\partial^*)$ on a K\"ahler manifold.

In Section~\ref{sec:Friedrich} we show that Friedrich's lower bound for the eigenvalues of the classical Dirac operator on a compact spin manifold of positive scalar curvature extends to a lower bound for the fundamental tone of the square of any Dirac operator even if the manifold $M$ is incomplete.
If $M$ is complete this implies a spectral gap for the Dirac operator itself.
We also discuss the equality case.
If the spectral bound is an eigenvalue of the Dirac operator, then the corresponding eigensection satisfies the Killing spinor equation, an overdetermined elliptic equation of first order.
In the case of the classical Dirac operator this implies that $M$ is compact and Einstein.
As another special case we give a simple proof of Lichnerowicz' classical lower bound for the first positive eigenvalue of the Laplace operator on a compact manifold with positive Ricci curvature.

In Section~\ref{sec:essential} we study the essential spectrum.
We show that if the curvature endomorphism of the Dirac operator is bounded from below at infinity, then a Friedrich-Lichnerowicz type estimate holds for the essential spectrum.
In particular, this yields a sufficient criterion for a Dirac operator on a complete manifold to be a Fredholm operator.
If the curvature endomorphism tends to infinity at infinity, then the spectrum of the square of the Dirac operator is discrete just as in the case of a compact manifold.

In the last section we study the classical Dirac operator on surfaces.
We show that if $M$ is a connected surface of genus $0$ with finite area and if the spin structure is bounding at infinity, then 
$$
\lambda_*(D^2) \geq \frac{4\pi}{\area(M)} .
$$
Here $M$ need not be complete and may have infinitely many ends.
If one drops the assumption on the spin structure, then the estimate fails.
The estimate is proved by reducing it to the case $M=S^2$ which was established by the author almost two decades ago.

\section{Generalized Dirac operators}
\label{sec:general}

Let $M$ denote a connected Riemannian manifold.
There will be no completeness or compactness assumption on $M$ unless we explicitly say so.
We will work with {\em generalized Dirac operators} in the sense of
Gromov and Lawson \cite{gromov-lawson83a}.
Let $E \to M$ be a Riemannian or Hermitian vector bundle over $M$ 
equipped with a metric connection $\nE$.

For $\phi,\psi\in C_c^\infty(M,E)$, the space of smooth, compactly supported sections of $E$, the $L^2$-product and norms are defined,
$$
(\phi,\psi) = \int_M \<\phi,\psi\> dV, 
\quad
\|\phi\|^2 = (\phi,\phi).
$$
Here $dV$ denotes the volume element induced by the Riemannian metric of $M$.
The Hilbert space obtained by completing $C_c^\infty(M,E)$ with respect to this scalar product is denoted by $L^2(M,E)$.
Pointwise scalar products will be denoted by $\<\cdot,\cdot\>$ and pointwise norms by $|\cdot|$.

We assume that tangent vectors act by {\em Clifford multiplication} on $E$,
i.~e.\ there is a vector bundle homomorphism
$$
TM \otimes E \to E,
\quad
X \otimes \phi \mapsto X \cdot \phi,
$$
satisfying
\begin{itemize}
\item
the Clifford relations 
$$
X \cdot Y \cdot \phi + Y  \cdot X \cdot \phi + 2 \< X,Y \> \phi = 0
$$
for all $X,Y \in T_pM$, $\phi \in E_p$, $p \in M$.
\item
skew symmetry
$$
\< X \cdot \phi, \psi \> = - \< \phi, X \cdot \psi \>
$$ 
for all $X \in T_pM$, $\phi,\psi \in E_p$, $p \in M$.
\item
the product rule
$$
\nE_X(Y \cdot \phi) = (\nabla_X Y) \cdot \phi + Y \cdot \nE_X \phi
$$
for all differentiable vector fields $X$ and $Y$ and for all differentiable
sections $\phi$ in $E$.
\end{itemize}
Here  $\nabla$ is the Levi-Civita connection of $M$.
The {\em Dirac operator} is now defined by
$$
D \phi := \sum_{i=1}^n e_i \cdot \nE_{e_i}\phi
$$
where $e_1, \ldots, e_n$ denotes any orthonormal tangent basis, $n=\dim(M)$.
This definition is independent of the choice of basis.

\begin{example}
Let $M$ carry a spin structure.
Then one can define the spinor bundle $E=\Sigma M$ which has all the required structure.
The corresponding generalized Dirac operator is the {\em classical Dirac operator}, sometimes also called the {\em Atiyah-Singer operator}.
See e.~g.\ \cite{LM} for the definitions.
\end{example}

\begin{example}\label{ex:Formen}
Let $M$ be an $n$-dimensional Riemannian manifold and $E = \bigoplus_{p=0}^n \Lambda^p T^*M$.
Using the wedge product and its adjoint one can define a Clifford multiplication such that the corresponding generalized Dirac operator is given by $D = d + \delta$.
Here $d$ is the exterior differential and $\delta$ is its adjoint.
See e.~g.\ \cite[Ch.~3]{Roe} for details.
\end{example}

\begin{example}
Let $M$ be a K\"ahler manifold of complex dimension $m$.
Then $E = \bigoplus_{q=0}^m \Lambda^{0,q}T^*M$ can be equipped with a Clifford multiplication such that the corresponding generealized Dirac operator is given by $D = \sqrt{2}(\bar{\partial} + \bar\partial^*)$, compare \cite[Ch.~3]{Roe}.
\end{example}

Any generalized Dirac operator is a formally self-adjoint elliptic differential operator of first order.
By \cite[Thm.~1.17]{gromov-lawson83a} the Dirac operator is 
essentially self-adjoint on the domain $C_c^\infty(M,E)$, smooth sections
of $E$ with compact support, in the Hilbert space $L^2(M,E)$ provided that $M$ is complete.
In this case, when we speak about the spectrum of the Dirac operator we will always
mean the spectrum of its self-adjoint closure.
The spectrum $\sigma(D)$ decomposes into
two disjoint parts, the {\em point spectrum} $\spp(D)$ consisting of all eigenvalues
(with square-integrable eigensections), and the {\em continuous spectrum}
$\scc(D)$.
By elliptic regularity theory eigensections are smooth.
If $M$ is compact, then $\scc(D)$ is empty and $\spp(D)$ is discrete
and all eigenvalues have finite multiplicity, compare Corollary~\ref{cor:sigmaDdiscrete}.

Since the operator $D^2$ is symmetric and nonnegative on the domain $C_c^\infty(M,E)$ it has a canonical self-adjoint extension even if $M$ is not complete.
This is the Friedrichs extension, the unique self-adjoint extension whose domain is contained in $H^1_D(M,E)$, the closure of $C_c^\infty(M,E)$ with respect to the norm induced by the scalar product 
$$
(\phi,\psi)_{H^1_D} = (\phi,\psi) + (D\phi,D\psi) .
$$
If $M$ is complete, then this is the only self-adjoint extension of $D^2$ and it coincides with the square of the self-adjoint extension of $D$.
If $M$ is the interior of a compact manifold $\overline M$ with boundary such that the metric extends, then taking the Friedrichs extension corresponds to imposing Dirichlet boundary conditions.
When we speak about the spectrum of $D^2$ we always mean the spectrum of the Friedrichs extension.
Again, the spectrum decomposes disjointly into the point spectrum and the continuous spectrum,
$$
\sigma(D^2) = \spp(D^2) \sqcup \scc(D^2) .
$$
Since $D^2$ is nonnegative its spectrum is contained in $[0,\infty)$.
We call $\lambda_*(D^2) := \min\sigma(D^2)$ the {\em Dirac fundamental tone}.
One has
$$
\lambda_*(D^2) 
\quad= 
\inf_{\phi\in C_c^\infty(M,E)\atop \phi\neq0}\frac{(D^2\phi,\phi)}{(\phi,\phi)}
\quad= 
\inf_{\phi\in C_c^\infty(M,E)\atop \phi\neq0}\frac{(D\phi,D\phi)}{(\phi,\phi)} 
\quad= 
\inf_{\phi\in H^1_D(M,E)\atop \phi\neq0}\frac{(D\phi,D\phi)}{(\phi,\phi)}.
$$
See \cite[Sec.~4.2]{weidmann00} for the functional analytic background.

For a differentiable section $\phi$ and a differentiable function $f:M \to \CC$ we have
\begin{equation}
D(f\phi) = \grad f \cdot \phi + f D\phi
\label{symbol}
\end{equation}
by the very definition of $D$.
The {\em curvature endomorphism} of $D$ is defined by
$$
\K := \halb \sum_{i,j=1}^n e_i \cdot e_j \cdot R^E(e_i,e_j)
$$
where $R^E$ is the curvature tensor of $\nE$.
The endomorphism field $\K$ is symmetric and we have the 
{\em Bochner-Lichnerowicz formula} \cite[Prop.~2.5]{gromov-lawson83a}
\begin{equation}
D^2 = (\nE)^*\nE + \K.
\label{bochner}
\end{equation}

If $D$ is the classical Dirac operator acting on spinors, then $\K = \frac14 \scal$.
If $D=d+\delta$, then the restriction of $\K$ to $1$-forms is Ricci curvature.

\section{A Friedrich inequality}
\label{sec:Friedrich}

Now we show that $D$ has a spectral gap provided $\K$ is 
uniformly positive, a result which is due to Friedrich in the compact case
\cite[Thm.~A]{friedrich80a}.
Since we cannot work with eigensections here, the proof needs modification.

\begin{thm}\label{friedrich}
Let $M$ be a (possibly incomplete) $n$-dimensional Riemannian manifold.
Let $D$ be a generalized Dirac operator on $M$ whose curvature endomorphism
is bounded from below by a positive constant $\kappa\in\RR$, i.~e.\
$\K \geq \kappa > 0$ in the sense of symmetric endomorphisms.
Then
$$
\lambda_*(D^2) \quad\geq\quad \frac{n\kappa}{n-1}  .
$$
\end{thm}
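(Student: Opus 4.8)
The plan is to adapt Friedrich's original ``modified connection'' argument, which compares the covariant derivative $\nE\phi$ with $D\phi$ through the Penrose (twistor) operator, but to carry it out for compactly supported test sections rather than for genuine eigensections. Since every manipulation will involve only $\phi\in C_c^\infty(M,E)$, there are no boundary terms and the incompleteness of $M$ causes no trouble; the asserted lower bound on $\lambda_*(D^2)$ then drops out of its variational characterization.

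First I would introduce the twistor operator $P$, defined on a local orthonormal frame $e_1,\dots,e_n$ by
$$
P_{e_i}\phi \;=\; \nE_{e_i}\phi + \frac1n\, e_i\cdot D\phi .
$$
Its decisive feature is that $\sum_i e_i\cdot P_{e_i}\phi = D\phi + \frac1n\sum_i e_i\cdot e_i\cdot D\phi = D\phi - D\phi = 0$, using the Clifford relation $e_i\cdot e_i = -1$. Thus $P$ isolates exactly the part of $\nE\phi$ that is Clifford-orthogonal to $D\phi$.

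The central step is the pointwise orthogonal decomposition
$$
|\nE\phi|^2 \;=\; |P\phi|^2 + \frac1n\,|D\phi|^2 .
$$
To obtain this I would expand $|\nE_{e_i}\phi|^2 = |P_{e_i}\phi - \tfrac1n e_i\cdot D\phi|^2$ and sum over $i$. The cross term vanishes because skew symmetry of Clifford multiplication turns $\sum_i\<P_{e_i}\phi, e_i\cdot D\phi\>$ into $-\<\sum_i e_i\cdot P_{e_i}\phi, D\phi\> = 0$, while the last term contributes $\frac1{n^2}\sum_i|e_i\cdot D\phi|^2 = \frac1n|D\phi|^2$ since Clifford multiplication by a unit vector is a pointwise isometry.

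Integrating this identity and combining it with the Bochner-Lichnerowicz formula \gl{bochner}, which for $\phi\in C_c^\infty(M,E)$ reads $\|D\phi\|^2 = \|\nE\phi\|^2 + (\K\phi,\phi)$ after an integration by parts that produces no boundary contribution, I would arrive at
$$
\frac{n-1}{n}\,\|D\phi\|^2 \;=\; \|P\phi\|^2 + (\K\phi,\phi) \;\geq\; \kappa\,\|\phi\|^2 ,
$$
using $\|P\phi\|^2\geq 0$ together with the hypothesis $\K\geq\kappa$. Taking the infimum over all nonzero $\phi\in C_c^\infty(M,E)$ and invoking the variational formula for $\lambda_*(D^2)$ yields the stated estimate. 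The one point requiring care is the justification of the integration by parts in the Weitzenb\"ock step on an incomplete manifold, but this is precisely where restricting to compactly supported sections removes the difficulty.
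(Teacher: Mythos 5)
Your argument is correct. It follows the same overall architecture as the paper's proof -- the variational characterization of $\lambda_*(D^2)$ over $\phi\in C_c^\infty(M,E)$, the Bochner--Lichnerowicz formula \gl{bochner} with no boundary terms thanks to compact support, and the pointwise inequality $|\nE\phi|^2\geq\frac1n|D\phi|^2$ -- but it derives that last inequality by a genuinely different route. The paper obtains it from the Cauchy--Schwarz inequality applied to $|D\phi|=|\sum_i e_i\cdot\nE_{e_i}\phi|\leq\sum_i|\nE_{e_i}\phi|\leq\sqrt{n}\,|\nE\phi|$, whereas you establish the exact orthogonal decomposition $|\nE\phi|^2=|P\phi|^2+\frac1n|D\phi|^2$ via the twistor operator; the paper's own footnote explicitly acknowledges this as an equivalent alternative. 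Your computations (the vanishing of $\sum_i e_i\cdot P_{e_i}\phi$ from $e_i\cdot e_i=-1$, the cross term killed by skew symmetry, and $|e_i\cdot D\phi|=|D\phi|$) are all valid for generalized Dirac operators in the sense used here, and the final step $\frac{n-1}{n}\|D\phi\|^2=\|P\phi\|^2+(\K\phi,\phi)\geq\kappa\|\phi\|^2$ is clean. What your version buys is a sharper handle on the equality case: the defect is identified as $\|P\phi\|^2$, so rigidity immediately forces the twistor equation $\nE_X\phi=-\frac1n X\cdot D\phi$, which together with $D\phi=\alpha\phi$ gives the Killing equation \eqref{eq:Killing} of Theorem~\ref{Killing} directly, whereas the paper has to trace equality through a Cauchy--Schwarz and a triangle inequality. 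What the paper's version buys is that it avoids introducing the auxiliary operator $P$ and stays entirely elementary. The only cosmetic point: in the expansion of $|P_{e_i}\phi-\frac1n e_i\cdot D\phi|^2$ over a Hermitian bundle the cross term is $-\frac2n\Re\<P_{e_i}\phi,e_i\cdot D\phi\>$, but since the full sum of the inner products vanishes this changes nothing.
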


\begin{proof}
Let $\phi\in C_c^\infty(M,E)$.
By \gl{bochner} we have
\begin{equation}
\|D\phi\|^2 
\,\, =\,\, 
(D^2\phi,\phi)
\,\, =\,\, 
(((\nE)^*\nE + \K)\phi,\phi)
\,\,\geq\,\, 
\|\nE\phi\|^2 + \kappa \|\phi\|^2.
\label{bochnerabsch}
\end{equation}
By the Cauchy-Schwarz inequality\footnote{This trick was shown to me by O.~Hijazi who in turn attributes it to S.~Gallot, cf.\ \cite[Thm.~5.3]{Hij}.
Alternatively, one could have argued using the twistor operator.} we have
\begin{eqnarray*}
|D\phi| &=& |\sum_{i=1}^n e_i\cdot \nE_{e_i}\phi|
\quad\leq\quad
\sum_{i=1}^n |\nE_{e_i}\phi| \\
&\leq&
\sqrt{\sum_{i=1}^n 1^2}\cdot \sqrt{\sum_{i=1}^n |\nE_{e_i}\phi|^2}
\quad = \quad
\sqrt{n}\, |\nE\phi|
\end{eqnarray*}
hence
$$
|\nE\phi|^2 \quad\geq\quad \frac{1}{n} |D\phi|^2.
$$
Plugging this into \gl{bochnerabsch} we get
$$
\|D\phi\|^2 \quad\geq\quad \frac{1}{n} \|D\phi\|^2 + \kappa \|\phi\|^2
$$
thus
$$
\frac{(D\phi,D\phi)}{(\phi,\phi)} \quad\geq\quad \frac{n\kappa}{n-1} 
$$
and the theorem follows.
\end{proof}

\begin{cor}
Let $M$ be a complete $n$-dimensional Riemannian manifold.
Let $D$ be a generalized Dirac operator on $M$ whose curvature endomorphism
is bounded from below by a positive constant $\kappa\in\RR$, i.~e.\
$\K \geq \kappa > 0$ in the sense of symmetric endomorphisms.
Then
$$
\sigma(D) \subset \RR \setminus
\left(-\sqrt{\frac{n\kappa}{n-1}},\sqrt{\frac{n\kappa}{n-1}} \right) .
$$
\end{cor}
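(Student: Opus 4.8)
The plan is to deduce this corollary from Theorem~\ref{friedrich} by transferring the lower bound on the spectrum of $D^2$ to a spectral gap for $D$ itself via the spectral theorem. Since $M$ is complete, $D$ is essentially self-adjoint on $C_c^\infty(M,E)$; I will continue to write $D$ for its self-adjoint closure. As recalled in Section~\ref{sec:general}, in the complete case the Friedrichs extension of $D^2$ (whose spectrum is what Theorem~\ref{friedrich} bounds) coincides with the square of this closure. Hence the estimate $\lambda_*(D^2) \geq \frac{n\kappa}{n-1}$ is genuinely a statement about $D^2$ in the operator-theoretic sense, with $\lambda_*(D^2) = \min\sigma(D^2)$.

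First I would invoke the spectral mapping theorem for the continuous function $t \mapsto t^2$: for a self-adjoint operator $D$ one has $\sigma(D^2) = \{\lambda^2 : \lambda \in \sigma(D)\}$. In particular, if $\lambda \in \sigma(D)$, then $\lambda^2 \in \sigma(D^2)$. Combining this with Theorem~\ref{friedrich} yields, for every $\lambda \in \sigma(D)$, the chain $\lambda^2 \geq \min\sigma(D^2) = \lambda_*(D^2) \geq \frac{n\kappa}{n-1}$. Taking square roots gives $|\lambda| \geq \sqrt{\frac{n\kappa}{n-1}}$, which is exactly the assertion that $\sigma(D)$ is disjoint from the open interval $\left(-\sqrt{\frac{n\kappa}{n-1}},\sqrt{\frac{n\kappa}{n-1}}\right)$.

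The only point requiring care is the identification that licenses the use of the spectral mapping theorem: one must know that the operator whose spectrum is estimated in Theorem~\ref{friedrich}, namely the Friedrichs extension of $D^2$, really is the square of the self-adjoint operator $D$. This is where completeness enters in an essential way, and it is precisely the fact recorded in Section~\ref{sec:general}; without it the passage from $\sigma(D^2)$ to $\sigma(D)$ would be unjustified. Once this identification is in place the remainder is a one-line consequence of the spectral theorem and Theorem~\ref{friedrich}.
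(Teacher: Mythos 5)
Your proof is correct and is exactly the argument the paper intends: the corollary is stated without proof precisely because, once one knows (as recorded in Section~\ref{sec:general}) that for complete $M$ the Friedrichs extension of $D^2$ is the square of the self-adjoint closure of $D$, the spectral mapping theorem turns the bound $\lambda_*(D^2)\geq \frac{n\kappa}{n-1}$ of Theorem~\ref{friedrich} into the stated gap for $\sigma(D)$. You have correctly identified the one point where completeness is essential.
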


This result cannot be improved in the sense that there are examples for which $\pm \sqrt{\frac{n\kappa}{n-1}}$ lies in the spectrum of $D$.
The question arises what we can say about such examples.
In order to proceed we need the following technical lemma whose proof is given in the appendix.

\begin{lemma}\label{nablaL2}
Let $M$ be a complete $n$-dimensional Riemannian manifold.
Let $D$ be a generalized Dirac operator on $M$ whose curvature endomorphism
is bounded from below by a constant $\kappa\in\RR$ (not necessarily positive), 
i.~e.\ $\K \geq \kappa$ in the sense of symmetric endomorphisms.
Let $\phi \in C^1(M,E)$.

If $\phi$ and $D\phi$ are square-integrable, then so is $\nE\phi$ and 
$$
\|D\phi\|^2 = \|\nE\phi\|^2 + (\K\phi,\phi)
$$
holds.
\end{lemma}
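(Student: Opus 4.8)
The plan is to reduce everything to the integrated form of the Bochner--Lichnerowicz identity \gl{bochner} on compactly supported sections and then run a cutoff argument, using completeness only to produce the cutoffs. First I would record that for every $\psi\in C_c^\infty(M,E)$ integration by parts turns \gl{bochner} into $\|D\psi\|^2 = \|\nE\psi\|^2 + (\K\psi,\psi)$. All three terms depend continuously on $\psi$ in the $H^1$-norm over any fixed compact set (on which $\K$ is bounded), so by approximation the identity persists for every compactly supported $H^1$-section. In particular it applies to $\chi\phi$ for any compactly supported Lipschitz function $\chi$, since $\phi\in C^1(M,E)$ makes $\chi\phi$ a compactly supported $H^1$-section.

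Next I would choose cutoffs: by Hopf--Rinow, completeness yields functions $\chi_j$ with $0\le\chi_j\le1$, $\chi_j\equiv1$ on $B_j$, $\supp\chi_j$ compact inside $B_{2j}$, and $|\grad\chi_j|\le C/j$. Applying the identity to $\psi=\chi_j\phi$ and expanding both sides with \gl{symbol} and $\nE(\chi_j\phi)=d\chi_j\otimes\phi+\chi_j\nE\phi$, one uses $|X\cdot\phi|=|X|\,|\phi|$ to see that the two terms $\int|\grad\chi_j|^2|\phi|^2$ arising on the two sides cancel. After rearranging this leaves
\[
\|\chi_j\nE\phi\|^2 = \|\chi_j D\phi\|^2 + 2\Re(\chi_j D\phi,\grad\chi_j\cdot\phi) - 2\Re\!\int_M\chi_j\<\nE_{\grad\chi_j}\phi,\phi\>\,dV - (\chi_j^2\K\phi,\phi).
\]

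The main obstacle is that the third term on the right still contains the unknown $\nE\phi$, so I would absorb rather than estimate it. Bounding it by $2\int_M\chi_j|\grad\chi_j|\,|\nE\phi|\,|\phi|\,dV$ and applying Young's inequality to the pairing $a=\chi_j|\nE\phi|$, $b=|\grad\chi_j|\,|\phi|$ moves $\halb\|\chi_j\nE\phi\|^2$ to the left-hand side and leaves a remainder $\le C j^{-2}\|\phi\|^2$. The second term is $\le Cj^{-1}\|D\phi\|\,\|\phi\|$, and the curvature term is controlled from above using only the lower bound, $-(\chi_j^2\K\phi,\phi)\le\max(0,-\kappa)\,\|\phi\|^2$. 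This produces a bound on $\|\chi_j\nE\phi\|^2$ uniform in $j$; since $\chi_j\uparrow1$, monotone convergence then gives $\nE\phi\in L^2$ with $\|\nE\phi\|^2$ no larger than that bound.

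Finally I would pass to the limit $j\to\infty$ in the displayed equation. Now that $\phi$, $D\phi$ and $\nE\phi$ all lie in $L^2$, the two cross terms are $O(j^{-1})$ and vanish, while $\|\chi_j D\phi\|^2\to\|D\phi\|^2$ and $\|\chi_j\nE\phi\|^2\to\|\nE\phi\|^2$ by monotone convergence. For the curvature term I would apply monotone convergence to the nonnegative integrand $\<\K\phi,\phi\>-\kappa|\phi|^2$ together with $\int_M\chi_j^2\kappa|\phi|^2\,dV\to\kappa\|\phi\|^2$, giving $(\chi_j^2\K\phi,\phi)\to(\K\phi,\phi)$; since every other limit is finite, this simultaneously shows $(\K\phi,\phi)$ is finite. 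Taking limits yields $\|D\phi\|^2 = \|\nE\phi\|^2 + (\K\phi,\phi)$, as claimed.
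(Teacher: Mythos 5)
Your argument is correct, but it reaches the crucial intermediate conclusion --- that $\nE\phi$ is square-integrable --- by a genuinely different route than the paper. The paper first invokes essential self-adjointness of $D$ on the complete manifold (so that the maximal and minimal extensions of $D$ coincide) to produce $\phi_k\in C_c^\infty(M,E)$ with $\phi_k\to\phi$ and $D\phi_k\to D\phi$ in $L^2$; the integrated Bochner inequality then bounds $\|\nE\phi_k\|$ uniformly, and a duality/Riesz representation argument identifies the distributional derivative $\nE\phi$ with an $L^2$ section. You never use essential self-adjointness: you get the uniform bound on $\|\chi_j\nE\phi\|$ directly from the cutoff identity by absorbing the cross term $2\Re\int_M\chi_j\<\nE_{\grad\chi_j}\phi,\phi\>\,dV$ with Young's inequality, which is the one genuinely new idea relative to the paper and the step that makes the self-adjointness input unnecessary. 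After that the two proofs finish the same way: cutoffs, cross terms of order $j^{-1}$, and monotone convergence for the curvature term using only the lower bound $\K\geq\kappa$. Your version is more elementary and self-contained --- completeness enters only through the existence of cutoffs with small gradient --- at the cost of a longer computation; the paper outsources the hard analysis to the Gromov--Lawson essential self-adjointness theorem and keeps the computation minimal. Two small points to tidy: the convergence $\|\chi_j\nE\phi\|^2\to\|\nE\phi\|^2$ in the a priori step should be justified by Fatou (or by choosing the $\chi_j$ pointwise nondecreasing) rather than by monotone convergence as stated, and the extension of the integrated Bochner identity from $C_c^\infty$ to compactly supported $H^1$-sections such as $\chi_j\phi$ deserves the explicit mollification argument you only sketch.
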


Note that we do not claim that $\K\phi \in L^2(E)$ unless $\K$ is also
bounded from above.
Nevertheless, the integral $(\K\phi,\phi)$ exists.

\begin{thm}\label{Killing}
Let $M$ be a complete $n$-dimensional Riemannian manifold.
Let $D$ be a generalized Dirac operator on $M$ whose curvature endomorphism
is bounded from below by a positive constant $\kappa\in\RR$, i.~e.\
$\K \geq \kappa > 0$ in the sense of symmetric endomorphisms.
Suppose $\alpha \in \spp(D)$ where $\alpha = \sqrt{\frac{n\kappa}{n-1}}$ or $\alpha = -\sqrt{\frac{n\kappa}{n-1}}$.

Then every square-integrable eigensection of $D$ to the eigenvalue $\alpha$ satisfies the overdetermined equation
\begin{equation}\label{eq:Killing}
\nE_X \phi = -\frac{\alpha}{n}X\cdot \phi
\end{equation}
for all $X\in TM$.
\end{thm}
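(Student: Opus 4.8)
The plan is to analyze the equality case in Theorem~\ref{friedrich} through the twistor (Penrose) operator, whose vanishing is exactly equivalent to the Killing equation \gl{eq:Killing}. Define
$$
P_X\phi := \nE_X\phi + \frac{1}{n}X\cdot D\phi, \qquad X\in TM .
$$
The first step is to record the pointwise identity
$$
\sum_{i=1}^n |P_{e_i}\phi|^2 = |\nE\phi|^2 - \frac{1}{n}|D\phi|^2
$$
for any orthonormal frame $e_1,\ldots,e_n$. This is the sharp form of the Cauchy-Schwarz estimate used in the proof of Theorem~\ref{friedrich}: expanding the squares, the skew symmetry of Clifford multiplication turns the cross term $\tfrac{2}{n}\Re\sum_i\<\nE_{e_i}\phi,e_i\cdot D\phi\>$ into $-\tfrac{2}{n}|D\phi|^2$, while $|e_i\cdot D\phi|=|D\phi|$ gives the quadratic term $+\tfrac{1}{n}|D\phi|^2$, and these combine to $-\tfrac1n|D\phi|^2$. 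Thus equality in the Friedrich estimate corresponds precisely to $P\phi\equiv0$.

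Next I would invoke the integrated Bochner formula. Since $\phi$ is a square-integrable eigensection with $D\phi=\alpha\phi\in L^2(M,E)$ and $M$ is complete, Lemma~\ref{nablaL2} applies and yields $\nE\phi\in L^2$ together with
$$
\|D\phi\|^2 = \|\nE\phi\|^2 + (\K\phi,\phi) .
$$
Integrating the twistor identity over $M$ gives $\|\nE\phi\|^2 = \|P\phi\|^2 + \frac{1}{n}\|D\phi\|^2$, where $\|P\phi\|^2:=\int_M\sum_i|P_{e_i}\phi|^2\,dV$ is finite because $\nE\phi,D\phi\in L^2$. Substituting, I obtain
$$
\frac{n-1}{n}\|D\phi\|^2 = \|P\phi\|^2 + (\K\phi,\phi) .
$$
Using $\|D\phi\|^2=\alpha^2\|\phi\|^2=\frac{n\kappa}{n-1}\|\phi\|^2$, the left-hand side equals $\kappa\|\phi\|^2$, whereas $(\K\phi,\phi)\geq\kappa\|\phi\|^2$ by hypothesis. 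Hence $\|P\phi\|^2\leq0$, which forces $P\phi\equiv0$ by smoothness of the eigensection. Writing this out and inserting $D\phi=\alpha\phi$ gives $\nE_X\phi=-\frac{\alpha}{n}X\cdot\phi$, the asserted equation.

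The main obstacle is the absence of compact support: the eigensection $\phi$ need not be compactly supported, so the integrated Bochner formula cannot be obtained by a naive integration by parts, and the potential boundary contributions at infinity must be controlled. This is exactly the role of Lemma~\ref{nablaL2}, which guarantees both that $\nE\phi$ is square-integrable and that $\|D\phi\|^2=\|\nE\phi\|^2+(\K\phi,\phi)$ holds with no correction term. Once this is in hand, the remainder of the argument is purely algebraic manipulation of the twistor identity; the only additional point to check is that the finiteness of $\|P\phi\|^2$ makes the inequality $\|P\phi\|^2\leq0$ meaningful, so that it indeed yields $P\phi=0$ pointwise.
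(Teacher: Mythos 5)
Your argument is correct, and it is precisely the alternative the paper alludes to in its footnote (``one could have argued using the twistor operator'') but does not carry out. The paper's own proof instead feeds the eigensection $\phi$ back through the chain of inequalities in the proof of Theorem~\ref{friedrich} --- justified, exactly as in your write-up, by Lemma~\ref{nablaL2} --- and observes that since $\alpha^2$ saturates the bound, each inequality must be an equality: equality in the Cauchy--Schwarz step gives $|\nE_{e_1}\phi|=\cdots=|\nE_{e_n}\phi|$, equality in the triangle inequality gives $e_1\cdot\nE_{e_1}\phi=\cdots=e_n\cdot\nE_{e_n}\phi$, hence $e_i\cdot\nE_{e_i}\phi=\frac1n D\phi=\frac{\alpha}{n}\phi$, and the Killing equation follows by Clifford-multiplying with $e_i$. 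Your route replaces this two-step equality discussion by the single pointwise identity $\sum_i|P_{e_i}\phi|^2=|\nE\phi|^2-\frac1n|D\phi|^2$ (which you verify correctly), integrates it, and concludes $\|P\phi\|^2\le 0$. What your version buys is a cleaner passage from the integrated equality to the pointwise statement: the paper must implicitly argue that an integrated equality forces the pointwise Cauchy--Schwarz and triangle inequalities to be equalities almost everywhere and then everywhere by continuity, whereas you get $P\phi\equiv 0$ directly from the vanishing of the integral of a nonnegative continuous function. Both proofs rest on Lemma~\ref{nablaL2} in exactly the same way; the only point you should make explicit is that $\phi$ is smooth by elliptic regularity, since the lemma requires $\phi\in C^1$ and your final step uses continuity of the integrand.
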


\begin{proof}
Let $\phi$ be an $L^2$-eigensection of $D$ to the eigenvalue $\alpha$.
By elliptic regularity theory $\phi$ is smooth.
Moreover, $\phi$ and $D\phi$ are $L^2$, so Lemma~\ref{nablaL2} applies.
Hence all estimates in the proof of Theorem~\ref{friedrich} hold for $\phi$ even though $\phi$ does not in general have compact support.
Since $\phi$ is an eigensection of $D^2$ to the eigenvalue $\alpha^2 = \frac{n\kappa}{n-1}$ all inequalities in the proof of Theorem~\ref{friedrich} must actually be equalities.
From equality in the Cauchy-Schwarz inequality 
$$
\sum_{i=1}^n |\nE_{e_i}\phi| 
\,\,\leq\,\,
\sqrt{\sum_{i=1}^n 1^2}\cdot \sqrt{\sum_{i=1}^n |\nE_{e_i}\phi|^2}
$$
we deduce
$$
|\nE_{e_1}\phi| \,\,= \cdots =\,\, |\nE_{e_n}\phi| .
$$
Equality in the triangle inequality
$$
|\sum_{i=1}^n e_i\cdot \nE_{e_i}\phi|
\quad\leq\quad
\sum_{i=1}^n |e_i\cdot\nE_{e_i}\phi|
\quad =\quad
\sum_{i=1}^n |\nE_{e_i}\phi|
$$
now implies
$$
e_1\cdot\nE_{e_1}\phi \,\,= \cdots =\,\, e_n\cdot\nE_{e_n}\phi .
$$
Hence
$$
e_1\cdot\nE_{e_1}\phi 
\,\,=\,\, 
\frac1n \sum_{i=1}^n e_i\cdot\nE_{e_i}\phi 
\,\,=\,\, 
\frac1n D\phi
\,\,=\,\, 
\frac{\alpha}{n}\phi 
$$
and therefore
$$
\nE_{e_1}\phi \,\,=\,\, -\frac{\alpha}{n}e_1\cdot\phi .
$$
Since the choice of orthonormal basis $e_1,\ldots,e_n$ is arbitrary we have 
$$
\nE_{X}\phi \,\,=\,\, -\frac{\alpha}{n}X\cdot\phi 
$$
for all tangent vectors $X$ of unit length and by linearity for all $X\in TM$.
\end{proof}

\begin{cor}
Under the assumptions of Theorem~\ref{Killing} the manifold $M$ has finite volume.
\end{cor}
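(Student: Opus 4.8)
The plan is to show that the pointwise norm $|\phi|$ of a square-integrable eigensection $\phi$ to the eigenvalue $\alpha$ is constant, and then to read off the volume bound from the finiteness of $\|\phi\|$. By Theorem~\ref{Killing} such a $\phi$ satisfies the Killing equation $\nE_X\phi = -\frac{\alpha}{n}X\cdot\phi$ for every $X\in TM$, and by elliptic regularity it is smooth, so $|\phi|^2$ is a smooth function on $M$ that I can differentiate.

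The computation is short. For any tangent vector $X$, since $\nE$ is a metric connection,
$$
X|\phi|^2 \,=\, 2\Re\<\nE_X\phi,\phi\> \,=\, -\frac{2\alpha}{n}\,\Re\<X\cdot\phi,\phi\>.
$$
The one point to observe is that $\<X\cdot\phi,\phi\>$ is purely imaginary: by the skew symmetry of Clifford multiplication $\<X\cdot\phi,\phi\> = -\<\phi,X\cdot\phi\> = -\overline{\<X\cdot\phi,\phi\>}$. As $\alpha$ is real, it follows that $X|\phi|^2 = 0$ for all $X$, so $|\phi|^2$ is locally constant; since $M$ is connected it is constant on all of $M$.

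It remains to turn this into the volume estimate. Because $\phi$ is an eigensection it does not vanish identically, so the constant value $c := |\phi|^2 > 0$. Then
$$
\|\phi\|^2 \,=\, \int_M |\phi|^2\,\dV \,=\, c\cdot\operatorname{vol}(M),
$$
and since $\phi\in L^2(M,E)$ the left-hand side is finite, forcing $\operatorname{vol}(M)<\infty$. The argument has no real obstacle; the only thing to get right is the vanishing of the real part above, which is exactly where the skew symmetry of Clifford multiplication enters and why it matters that the eigenvalue $\alpha$ is real.
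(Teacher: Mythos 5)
Your proof is correct and follows essentially the same route as the paper: differentiate $|\phi|^2$ using the Killing equation from Theorem~\ref{Killing}, use the skew symmetry of Clifford multiplication to see the derivative vanishes, and conclude from $\|\phi\|^2 = c\cdot\operatorname{vol}(M) < \infty$ with $c>0$ that the volume is finite. Your explicit remark that the constant is strictly positive (since the eigensection is not identically zero) is a detail the paper leaves implicit, but the argument is identical in substance.
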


\begin{proof}
From \eqref{eq:Killing} we conclude that the eigensection $\phi$ has constant length:
\begin{eqnarray*}
\partial_X |\phi|^2
&=&
\<\nE_X\phi,\phi\> + \<\phi,\nE_X\phi\> \\
&=&
-\frac\alpha n \left(\<X\cdot\phi,\phi\> + \<\phi,X\cdot\phi\>\right) \\
&=&
0.
\end{eqnarray*}
Since $\phi$ is square integrable the volume of $M$ must be finite.
\end{proof}

\begin{cor}
Let $M$ be a complete connected $n$-dimensional Riemannian spin manifold with scalar curvature $\scal \geq S$ for some positive constant $S$.
Let $D$ be the classical Dirac operator acting on spinors.
Suppose $\alpha \in \spp(D)$ where $\alpha = \frac12\sqrt{\frac{nS}{n-1}}$ or $\alpha = -\frac12\sqrt{\frac{nS}{n-1}}$.

Then $M$ is compact and Einstein with $\ric=\frac{S}{n}g$.
\end{cor}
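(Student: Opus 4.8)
The plan is to observe that the hypothesis places us exactly at the threshold of Theorem~\ref{Killing}, and then to read off the geometry from the resulting Killing spinor equation. For the classical Dirac operator acting on spinors one has $\K = \frac14\scal \geq \frac{S}{4} =: \kappa > 0$, and for this value of $\kappa$ the critical value of Theorem~\ref{friedrich} is precisely $\sqrt{\frac{n\kappa}{n-1}} = \frac12\sqrt{\frac{nS}{n-1}} = |\alpha|$. Hence Theorem~\ref{Killing} applies and any $L^2$-eigenspinor $\phi$ to the eigenvalue $\alpha$ is a Killing spinor,
$$
\nE_X\phi = \mu\, X\cdot\phi, \qquad \mu := -\frac{\alpha}{n},
$$
for all $X\in TM$. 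As in the proof of the preceding corollary the Killing equation forces $|\phi|$ to be constant, and since $\phi\not\equiv 0$ it vanishes nowhere.

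Next I would extract the Einstein condition from the integrability condition of the Killing equation. Differentiating $\nE_X\phi=\mu X\cdot\phi$ once more and antisymmetrizing, the product rule and the Clifford relations yield after a short computation
$$
R^E(X,Y)\phi = \mu^2\,(Y\cdot X - X\cdot Y)\cdot\phi .
$$
Clifford-multiplying by $e_i$, summing over an orthonormal frame $e_1,\dots,e_n$, using $e_i\cdot e_i=-1$, and invoking the standard contraction identity $\sum_i e_i\cdot R^E(e_i,e_j)\phi = \frac12\ric(e_j)\cdot\phi$ gives
$$
\ric(e_j)\cdot\phi = 4(n-1)\mu^2\, e_j\cdot\phi
$$
for every $j$. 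Because Clifford multiplication by a tangent vector $V$ satisfies $V\cdot V\cdot\phi=-|V|^2\phi$, it is injective on the nowhere-vanishing spinor $\phi$; hence $\ric(e_j)=4(n-1)\mu^2 e_j$ for all $j$, that is, $M$ is Einstein with $\ric = 4(n-1)\mu^2\, g$.

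It then remains to identify the constant and to deduce compactness. Substituting $\mu^2=\alpha^2/n^2$ and $\alpha^2=\frac{nS}{4(n-1)}$ gives $4(n-1)\mu^2 = \frac{S}{n}$, so $\ric = \frac{S}{n}\,g$ as claimed; in particular $\scal\equiv S$. Finally, $\ric=\frac{S}{n}g$ is a positive constant multiple of $g$ and $M$ is complete, so the Bonnet--Myers theorem bounds the diameter of $M$ and forces $M$ to be compact.

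The main obstacle is the integrability computation: getting the signs and the constant $4(n-1)$ right when passing from $R^E(X,Y)\phi=\mu^2(Y\cdot X - X\cdot Y)\phi$ through the contraction to the Ricci identity. Once this normalization is pinned down (consistently, for instance, with the round sphere, where $\mu=\pm\frac12$ yields $\ric=(n-1)g$), the remaining steps are routine Clifford bookkeeping together with a direct appeal to Bonnet--Myers.
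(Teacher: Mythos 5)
Your proposal is correct. The first step coincides with the paper's: identify $\K=\frac14\scal\geq\frac{S}{4}=\kappa$, note that $|\alpha|=\sqrt{\frac{n\kappa}{n-1}}$, and invoke Theorem~\ref{Killing} to produce a nontrivial Killing spinor with real Killing number $\mu=-\alpha/n\neq 0$. Where you diverge is in the second half: the paper at this point simply cites Friedrich \cite[Thm.~B]{friedrich80a} and \cite[p.~31, Thm.~9]{BFGK} for the facts that a manifold carrying such a spinor is Einstein and compact, whereas you rederive both facts. Your integrability computation is the standard one and your constants check out: $R^E(X,Y)\phi=\mu^2(Y\cdot X-X\cdot Y)\cdot\phi$, the contraction $\sum_i e_i\cdot(e_j\cdot e_i-e_i\cdot e_j)=2(n-1)e_j$ combined with the Ricci identity gives $\ric(e_j)\cdot\phi=4(n-1)\mu^2\,e_j\cdot\phi$, and injectivity of Clifford multiplication on the nowhere-vanishing spinor (constancy of $|\phi|$ follows from skew-symmetry exactly as in the finite-volume corollary, and connectedness of $M$ then gives $|\phi|\equiv\mathrm{const}>0$) yields $\ric=4(n-1)\mu^2 g=\frac{S}{n}g$; Bonnet--Myers then gives compactness. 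The only caveat is the sign of the contraction identity, which depends on curvature conventions, but you flag this and normalize against the round sphere, which is adequate. Your version buys a self-contained argument at the cost of some Clifford bookkeeping; the paper's buys brevity at the cost of two external references.
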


\begin{proof}
Theorem~\ref{Killing} applies with $\K = \frac{\scal}{4}$ and $\kappa=\frac{S}{4}$.
Hence $M$ carries a nontrivial Killing spinor with real nonzero Killing number.
The corollary follows from \cite[Thm.~B]{friedrich80a} and \cite[p.~31, Thm.~9]{BFGK}.
\end{proof}

This corollary was shown by Gro\ss e with different methods in \cite[Thm.~4.0.7 (a)]{grosse}.

\begin{remark}
If $M$ is a compact Riemannian spin manifold of dimension $n\geq 3$, then there is an interesting refinement of Friedrich's inequality due to Hijazi.
Namely, one has
\begin{equation}\label{eq:Hijazi}
\lambda_*(D^2) \geq \frac{n}{4(n-1)}\lambda_*(L)
\end{equation}
where $D$ is the classical Dirac operator acting on spinors and $L=4\frac{n-1}{n-2}\Delta + \scal$ is the Yamabe operator (or conformal Laplacian) acting on functions, see \cite[Thm.~A]{Hij0}.
It was noted by Gro\ss e that this no longer holds on noncompact complete manifolds \cite[Rem.~4.2.1]{grosse}.
Hyperbolic space provides a simple counter-example.
However, \eqref{eq:Hijazi} may hold on complete manifolds of finite volume.
Partial results can be found in \cite[Thms~1.1 and 1.2]{grosse0}, see also \cite[Thms~4.0.5 and 4.2.2]{grosse}.
\end{remark}

\begin{remark}
Theorem~\ref{friedrich} can be refined as follows.
Let $D$ be a generalized Dirac operator acting on sections of a Riemannian or Hermitian vector bundle $E$ over a (possibly incomplete) Riemannian manifold $M$.
Suppose there is an orthogonal splitting $E=E_0 \oplus E_1$ with respect to which we write 
$$
D^2 = \begin{pmatrix}\Delta_{00} & \Delta_{10} \cr \Delta_{01} & \Delta_{11}\end{pmatrix}
\quad\mbox{ and }\quad
\K = \begin{pmatrix}\K_{00} & \K_{10} \cr \K_{01} & \K_{11}\end{pmatrix} .
$$
Then the proof of Theorem~\ref{friedrich} with $\phi\in C_c^\infty(M,E_0)$ shows that if $\K_{00} \geq  \kappa > 0$, then 
$$
\lambda_*(\Delta_{00}) \,\,\geq\,\, \frac{n\kappa}{n-1}
$$
where $n=\dim(M)$.
\end{remark}

\begin{example}\label{ex:Delta1}
Let $E=\bigoplus_{p=0}^n \Lambda^p T^*M$ as in Example~\ref{ex:Formen}, $E_0=\Lambda^1 T^*M = T^*M$, $E_1 = \bigoplus_{p\neq 1} \Lambda^p T^*M$, and $D=d+\delta$.
Then $\Delta_{00}=d\delta+\delta d$ is the Hodge-Laplacian acting on $1$-forms and $\K_{00}= \Ric$.
Hence if $\Ric \geq \kappa$ for a positive constant $\kappa$, then 
$$
\lambda_*(d\delta+\delta d \mbox{ on $1$-forms}) \,\,\geq\,\, \frac{n\kappa}{n-1} .
$$
\end{example}

In the case that $M$ is compact this is a classical theorem by Lichnerowicz
\cite[p.~145]{Lic}.
For functions we conclude

\begin{cor}[Lichnerowicz]
Let $M$ be a compact $n$-dimensional Riemannian manifold.
Suppose $\ric \geq \kappa$ where $\kappa$ is a positive constant.
Let $\Delta$ be the Laplace-Beltrami operator acting on functions and $\lambda_1(\Delta)$ its first nonzero eigenvalue.
Then
$$
\lambda_1(\Delta) \,\,\geq\,\, \frac{n\kappa}{n-1}.
$$
\end{cor}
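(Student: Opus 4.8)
The plan is to deduce the scalar estimate from the $1$-form estimate recorded in Example~\ref{ex:Delta1} by exterior differentiation. Since $M$ is compact with $\ric \geq \kappa > 0$, that example applies and gives
$$
\lambda_*(d\delta + \delta d \mbox{ on $1$-forms}) \,\,\geq\,\, \frac{n\kappa}{n-1},
$$
and on a compact manifold this fundamental tone is just the smallest element of the (discrete) spectrum of the Hodge-Laplacian acting on $1$-forms. One cannot apply the refinement directly to functions: on $0$-forms the curvature endomorphism $\K$ of $d+\delta$ vanishes, so the passage to $1$-forms, where $\K_{00}=\Ric$, is essential.

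Next I would choose a nonconstant eigenfunction $f$ of the Laplace-Beltrami operator, $\Delta f = \lambda_1 f$, which exists because $\lambda_1\neq 0$. Put $\omega := df$. Since the exterior differential commutes with the Hodge-de Rham Laplacian, and since $d\delta + \delta d$ restricts to the scalar Laplacian on functions, we get
$$
(d\delta + \delta d)\,\omega \,\,=\,\, d\,(d\delta+\delta d)f \,\,=\,\, d(\Delta f) \,\,=\,\, \lambda_1\, df \,\,=\,\, \lambda_1\,\omega .
$$
As $f$ is nonconstant, $\omega = df \neq 0$, so $\omega$ is a genuine nonzero eigen-$1$-form of the Hodge-Laplacian to the eigenvalue $\lambda_1$.

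Therefore $\lambda_1$ lies in the spectrum of the Hodge-Laplacian on $1$-forms, whence
$$
\lambda_1 \,\,\geq\,\, \lambda_*(d\delta+\delta d \mbox{ on $1$-forms}) \,\,\geq\,\, \frac{n\kappa}{n-1},
$$
which is the assertion. I expect no serious obstacle here; the only points requiring care are the bookkeeping that forces the detour through $1$-forms (the curvature term being trivial on functions) and the verification, via the sign and degree conventions, that an eigenfunction $f$ with $\Delta f = \lambda_1 f$ indeed yields the nonzero eigen-$1$-form $df$ with $(d\delta+\delta d)df = \lambda_1 df$.
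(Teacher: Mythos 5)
Your proposal is correct and follows essentially the same route as the paper: both apply the $1$-form estimate from Example~\ref{ex:Delta1} to the differential $df$ of a first eigenfunction. The only cosmetic difference is that you observe $df$ is itself an eigenform of the Hodge-Laplacian via $d\Delta=\Delta_1 d$, whereas the paper evaluates the Rayleigh quotient of $df$ directly; these amount to the same computation.
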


\begin{proof}
Let $\phi\in C^\infty(M)$ be an eigenfunction of $\Delta$ to the eigenvalue $\lambda_1(\Delta)$.
Since $\lambda_1(\Delta)>0$ the differential $d\phi \in C^\infty(M, \Lambda^1T^*M)$ is not identically zero.
Denote the Hodge-Laplacian on $1$-forms by $\Delta_1$.
Thus be Example~\ref{ex:Delta1}
$$
\frac{n\kappa}{n-1}
\,\, \leq \,\,
\lambda_*(\Delta_1)
\,\, \leq \,\,
\frac{(\Delta_1 d\phi,d\phi)}{(d\phi,d\phi)}
\,\, = \,\,
\frac{(\Delta^2 \phi,\phi)}{(\Delta\phi,\phi)}
\,\, = \,\,
\lambda_1(\Delta) .
$$
\end{proof}

Note that if $M$ is complete and satisfies $\ric \geq \kappa > 0$, then $M$ is compact by the Bonnet-Myers theorem.
A Lichnerowicz type estimate for functions on incomplete manifolds is not to be expected.

\begin{example}
The $2$-sphere $S^2$ with its canonical metric $g$ has constant Gauss curvature $1$, hence $\ric = g$.
Indeed, $\lambda_1(\Delta) = 2 = \frac{2\cdot 1}{2-1}$ on $(S^2,g)$.
Removing north and south pole we obtain an incomplete surface $M_1$ diffeomorphic to $(-\frac{\pi}{2},\frac{\pi}{2}) \times \RR/2\pi\ZZ$ with Riemannian metric $g=d\theta^2 + \cos^2(\theta)d\phi^2$ where $\theta\in (-\frac{\pi}{2},\frac{\pi}{2})$ and $\phi\in\RR/2\pi\ZZ$.
For $k\in\NN$ let $M_k = (-\frac{\pi}{2},\frac{\pi}{2}) \times \RR/2k\pi\ZZ$ be the $k$-fold covering of $M_1$ with the pull-back metric.
Again, $M_k$ is an incomplete surface with constant Gauss curvature $1$.

\begin{center}
\includegraphics[width=35mm]{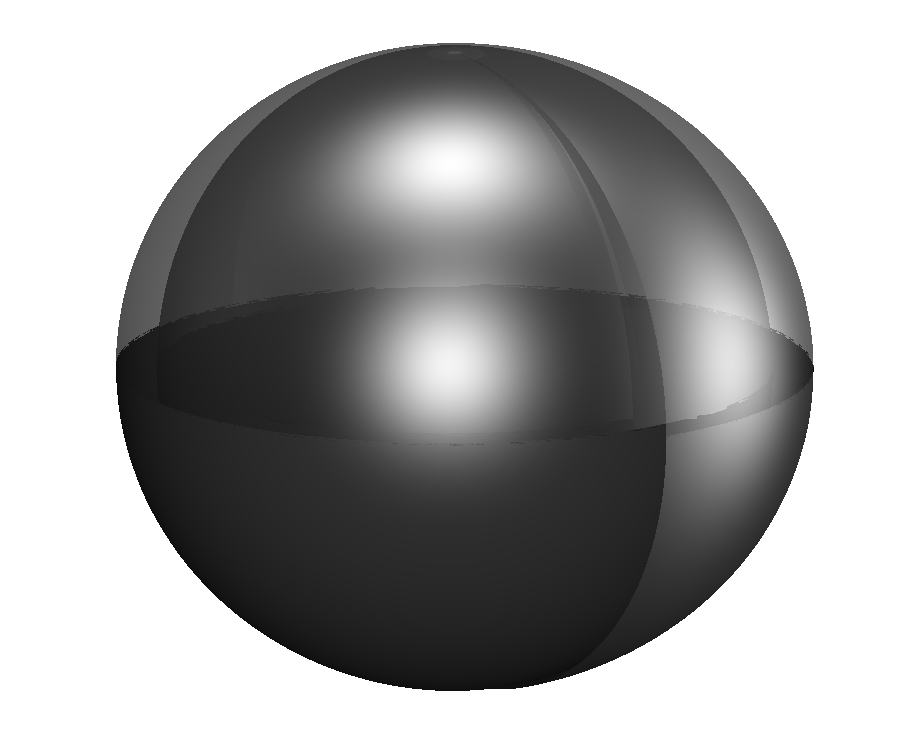}
$\begin{array}{c}
\hspace{3mm}\xrightarrow{\hspace{2mm}2:1\hspace{2mm}}\hspace{3mm}\\
\vspace{2cm}\\
\end{array}$
\includegraphics[width=30mm]{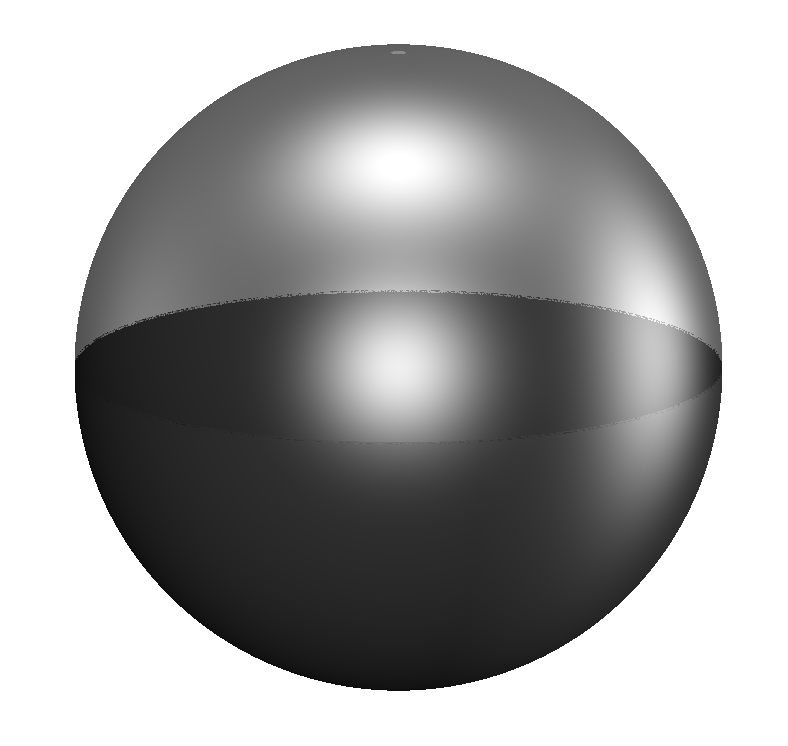}

\vspace{-15mm}
\hspace{4mm}
$M_2$
\hspace{48mm}
$M_1$

{\em Fig.~1}
\end{center}

Now $f_k(\theta,\phi) = \cos(\theta)\cos(\phi/k)$ is a well-defined smooth function on $M_k$.
Since $f_k$ is bounded and $M_k$ has finite area $f_k$ is square integrable.
It is not hard to check that $f_k \in H^1_{d+\delta}(M_k)$ and can hence be used as a test function for $\Delta$.
One easily computes
$$
(f_k,f_k)_{L^2(M_k)} 
=
\int_{-\pi/2}^{\pi/2}\int_{0}^{2k\pi}\cos^2(\theta)\cos^2(\phi/k)\cos(\theta)d\phi d\theta
=
\frac{4k\pi}{3},
$$
$$
\Delta f_k 
=
\left(\tan(\theta)\frac{\partial}{\partial\theta} - \frac{\partial^2}{\partial\theta^2} - \frac{1}{\cos^2(\theta)}\frac{\partial^2}{\partial\phi^2}\right)f_k
=
2f_k -(1-k^{-2})\frac{\cos(\phi/k)}{\cos(\theta)}
$$
and hence
$$
\frac{(\Delta f_k,f_k)_{L^2(M_k)}}{(f_k,f_k)_{L^2(M_k)}}
=
2 - \frac32 (1-k^{-2}) .
$$
For $k\geq 2$ this Rayleigh quotient is smaller than $2$ despite the fact that $f_k$ is $L^2$-perpendicular to the constant functions, i.e., to the kernel of $\Delta$.
In this sense, the Lichnerowicz estimate is violated on $M_k$ for $k\geq 2$.
\end{example}

\section{The essential spectrum}
\label{sec:essential}

The subset of $\spp(D^2)$ consisting of eigenvalues of finite multiplicity which are isolated in $\sigma(D^2)$ is called the {\em discrete spectrum} of $D^2$ and is denoted by $\sd(D^2)$.
Its complement in $\sigma(D^2)$ is called the {\em essential spectrum} and is denoted by $\se(D^2)$.
The essential spectrum is unaffected by changes on the manifold and the operator in a compact region.
More precisely, if $M$ and $\tilde M$ are Riemannian manifolds equipped with generalized Dirac operators $D$ and $\tilde D$ and if there are compact subsets $K\subset M$ and $\tilde K \subset \tilde M$ such that $M\setminus K=\tilde M \setminus \tilde K$ and $D=\tilde D$ outside $K$ and $\tilde K$ resp., then $\se(D^2) = \se(\tilde D^2)$.
In the literature this is known as the {\em decomposition principle}, see e.~g.\ \cite[Prop.~1]{Baer2000} for a proof.

We say that the curvature endomorphism $\K$ tends to $\infty$ at $\infty$, in symbols $\lim_{x\to\infty}\K(x)=\infty$, if for each $\kappa\in\RR$ there exists a compact subset $K\subset M$ such that $\K(x)\geq\kappa$ for all $x\in M\setminus K$.
Given $\kappa\in\RR$ we write $\liminf_{x\to\infty}\K(x) \geq \kappa$ if for each $\epsilon>0$ there exists a compact subset $K\subset M$ such that $\K(x)\geq\kappa-\epsilon$ for all $x\in M\setminus K$.

\begin{thm}\label{friedrich-ess}
Let $M$ be a (possibly incomplete) $n$-dimensional Riemannian manifold.
Let $D$ be a generalized Dirac operator on $M$ whose curvature endomorphism
satisfies $\liminf_{x\to\infty}\K(x) \geq \kappa$ for some positive constant $\kappa$.
Then
$$
\se(D^2) \subset \left[\frac{n\kappa}{n-1},\infty\right)  .
$$
\end{thm}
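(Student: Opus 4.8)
The plan is to combine the Friedrich inequality of Theorem~\ref{friedrich}, applied to a neighborhood of infinity, with a Persson-type variational description of the bottom of the essential spectrum, close in spirit to the decomposition principle. Since the assertion is equivalent to $\inf\se(D^2) \geq \frac{n\kappa}{n-1}$, and since by the definition of $\liminf$ we may for every $\epsilon\in(0,\kappa)$ choose a compact set $K\subset M$ with $\K \geq \kappa-\epsilon$ on $M\setminus K$, it suffices to prove $\inf\se(D^2) \geq \frac{n(\kappa-\epsilon)}{n-1}$ for each such $\epsilon$ and then to let $\epsilon\to 0$. (If $\se(D^2)$ is empty the inclusion is trivial.)

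First I would apply Theorem~\ref{friedrich} to the (in general incomplete) open manifold $M\setminus K$ carrying the restricted Dirac operator. Its curvature endomorphism is the restriction of $\K$ and is therefore bounded below by $\kappa-\epsilon>0$, so Theorem~\ref{friedrich} yields
$$
\lambda_*^{\mathrm{ext}} \,:=\, \inf_{\phi\in C_c^\infty(M\setminus K,E),\, \phi\neq 0}\frac{(D\phi,D\phi)}{(\phi,\phi)} \,\geq\, \frac{n(\kappa-\epsilon)}{n-1}.
$$
It then remains to establish the inequality $\inf\se(D^2)\geq\lambda_*^{\mathrm{ext}}$, i.e.\ that no point of the essential spectrum can lie below the fundamental tone ``at infinity''. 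For this step I would pick any $\lambda\in\se(D^2)$ and invoke Weyl's criterion to produce a singular sequence $\phi_j$ in the domain of the Friedrichs extension with $\|\phi_j\|=1$, $\phi_j\rightharpoonup 0$ weakly in $L^2$, and $\|(D^2-\lambda)\phi_j\|\to 0$; this forces $\|D\phi_j\|^2=(D^2\phi_j,\phi_j)\to\lambda$.

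Next I would cut off near $K$. Choosing $\chi\in C^\infty(M)$ with $0\leq\chi\leq 1$, $\chi\equiv 0$ on a neighborhood of $K$ and $\chi\equiv 1$ outside a compact neighborhood $K'$, the sections $\chi\phi_j$ are supported in $M\setminus K$ and hence admissible for $\lambda_*^{\mathrm{ext}}$. Using the product rule $D(\chi\phi_j)=\grad\chi\cdot\phi_j+\chi D\phi_j$ from \gl{symbol}, the pointwise identity $|\grad\chi\cdot\phi_j|=|\grad\chi|\,|\phi_j|$, and the Cauchy–Schwarz inequality, one obtains $\|D(\chi\phi_j)\|^2\leq\|\chi D\phi_j\|^2+o(1)\leq\|D\phi_j\|^2+o(1)$ and $\|\chi\phi_j\|^2\to 1$, provided the commutator term $\|\grad\chi\cdot\phi_j\|$ tends to $0$. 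Passing to the limit in $\lambda_*^{\mathrm{ext}}\leq\|D(\chi\phi_j)\|^2/\|\chi\phi_j\|^2$ then gives $\lambda\geq\lambda_*^{\mathrm{ext}}$, which is what is needed.

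The crux, and the main obstacle, is exactly the vanishing of the commutator term $\|\grad\chi\cdot\phi_j\|$. Since $\grad\chi$ is supported in the compact set $K'\setminus K$, this reduces to showing that the $L^2$-mass of $\phi_j$ on $K'$ tends to $0$. Here I would use local elliptic regularity together with Rellich compactness: the uniform bound $\|D\phi_j\|^2\to\lambda$ gives a uniform $H^1$-bound for $\phi_j$ over $K'$, and since $\phi_j\rightharpoonup 0$ weakly the compact embedding $H^1(K')\hookrightarrow L^2(K')$ forces $\|\phi_j\|_{L^2(K')}\to 0$. All of this analysis is purely local, hence insensitive to incompleteness of $M$; the nonnegativity of $D^2$ and the self-adjointness of its Friedrichs extension are precisely what make Weyl's criterion available from the outset.
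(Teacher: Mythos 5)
Your proof is correct, and its skeleton coincides with the paper's: for each $\epsilon>0$ choose a compact $K$ with $\K\geq\kappa-\epsilon$ on $M\setminus K$, apply Theorem~\ref{friedrich} to the restricted operator on the (incomplete) manifold $M\setminus K$, and let $\epsilon\searrow 0$. Where you diverge is in the bridge between $\se(D^2)$ and the spectrum at infinity: the paper simply invokes the decomposition principle $\se(D^2)=\se(\tilde D^2)$, citing \cite[Prop.~1]{Baer2000}, whereas you prove from scratch the one inequality actually needed, namely the Persson-type bound $\inf\se(D^2)\geq\lambda_*^{\mathrm{ext}}$, via a Weyl singular sequence, a cut-off supported away from $K$, and local elliptic regularity plus Rellich to kill the commutator term. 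This buys self-containedness (no appeal to an external proposition, and only the one-sided estimate is established, which is all the theorem requires), at the cost of length; the decomposition principle is the stronger two-sided statement and makes the paper's proof three lines. One small point to tighten: $\chi\phi_j$ is supported in $M\setminus K$ but is \emph{not} compactly supported there, so it is not admissible for the $C_c^\infty$-infimum as written; you should instead use the equivalent variational characterization of $\lambda_*$ over $H^1_{D}(M\setminus K,E)$ recorded in Section~\ref{sec:general}, noting that if $\psi_k\in C_c^\infty(M,E)$ approximate $\phi_j$ in the graph norm of $D$ then $\chi\psi_k\in C_c^\infty(M\setminus K,E)$ approximate $\chi\phi_j$, so $\chi\phi_j\in H^1_D(M\setminus K,E)$. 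With that repair the argument is complete.
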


\begin{proof}
Let $\epsilon >0$.
We choose a compact subset $K\subset M$ such that $\K(x)\geq\kappa-\epsilon$ for all $x\in M\setminus K$.
We put $\tilde M:= M \setminus K$ and we let $\tilde D$ be the restriction of $D$ to $\tilde M$.
By the decomposition principle and by Theorem~\ref{friedrich} we get
$$
\se(D^2) = \se(\tilde D^2) \subset \left[\frac{n(\kappa-\epsilon)}{n-1},\infty\right) .
$$
Taking the limit $\epsilon \searrow 0$ concludes the proof.
\end{proof}

\begin{cor}
Let $M$ be a complete $n$-dimensional Riemannian manifold.
Let $D$ be a generalized Dirac operator on $M$ whose curvature endomorphism
satisfies $\liminf_{x\to\infty}\K(x) \geq \kappa$ for some positive constant $\kappa$.
Then
$$
\se(D) \subset \RR \setminus \left(-\sqrt{\frac{n\kappa}{n-1}},\sqrt{\frac{n\kappa}{n-1}}\right)  .
$$
In particular, $D$ is Fredholm.
\end{cor}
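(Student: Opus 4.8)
The plan is to transfer the spectral bound for $D^2$ obtained in Theorem~\ref{friedrich-ess} to the operator $D$ itself by means of the spectral theorem. Since $M$ is complete, $D$ is essentially self-adjoint on $C_c^\infty(M,E)$, and its square — which here means the Friedrichs extension of $D^2$ — coincides with the square of the self-adjoint closure of $D$, as recorded in Section~\ref{sec:general}. Let $P$ denote the projection-valued spectral measure of the self-adjoint operator $D$. By the functional calculus, the spectral measure of $D^2$ is the push-forward of $P$ under the map $t\mapsto t^2$.

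The key step is the implication: if $\lambda\in\se(D)$, then $\lambda^2\in\se(D^2)$. To see this, recall that $\lambda\in\se(D)$ means that $P\big((\lambda-\epsilon,\lambda+\epsilon)\big)$ has infinite rank for every $\epsilon>0$. Given $\delta>0$, the preimage of $(\lambda^2-\delta,\lambda^2+\delta)$ under $t\mapsto t^2$ contains an interval $(\lambda-\epsilon,\lambda+\epsilon)$ for some $\epsilon>0$, so the spectral projection of $D^2$ associated with $(\lambda^2-\delta,\lambda^2+\delta)$ dominates $P\big((\lambda-\epsilon,\lambda+\epsilon)\big)$ and therefore also has infinite rank. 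Hence $\lambda^2\in\se(D^2)$, and this argument covers the case $\lambda=0$ as well. I expect this spectral-mapping step to be the only point requiring genuine care: working directly with Weyl sequences is delicate, because a singular Weyl sequence for $D$ at $\lambda$ need not lie in the domain of $D^2$, whereas the projection argument sidesteps this issue entirely.

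Combining the implication with Theorem~\ref{friedrich-ess} yields the asserted inclusion. Indeed, if $\lambda\in\se(D)$, then $\lambda^2\in\se(D^2)\subset\left[\frac{n\kappa}{n-1},\infty\right)$, so $|\lambda|\geq\sqrt{\frac{n\kappa}{n-1}}$. Equivalently,
$$
\se(D)\subset\RR\setminus\left(-\sqrt{\frac{n\kappa}{n-1}},\sqrt{\frac{n\kappa}{n-1}}\right).
$$

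Finally, for the Fredholm assertion I would invoke the standard fact that a self-adjoint operator is Fredholm precisely when $0$ does not belong to its essential spectrum. Since $\kappa>0$ forces $\sqrt{\frac{n\kappa}{n-1}}>0$, the inclusion just proved gives $0\notin\se(D)$, and therefore $D$ is Fredholm. Apart from the spectral-mapping step in the second paragraph, every ingredient is a direct appeal to Theorem~\ref{friedrich-ess} or to general spectral theory.
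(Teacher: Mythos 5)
Your argument is correct and is exactly the spectral-mapping reasoning the paper leaves implicit: the corollary is stated without proof as an immediate consequence of Theorem~\ref{friedrich-ess}, using that on a complete manifold the Friedrichs extension of $D^2$ is the square of the self-adjoint closure of $D$ and that $\lambda\in\se(D)$ forces $\lambda^2\in\se(D^2)$ via the infinite-rank characterization of essential spectrum. Your handling of the $\lambda=0$ case and the Fredholm criterion ($0\notin\se(D)$ for a self-adjoint operator) fills in the standard details appropriately.
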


An entirely different proof based on elliptic estimates of the finite dimensionality of the kernel of $D$ can be found in \cite[Thm.~3.2]{gromov-lawson83a}.
This Fredholm property is crucial for index theory of Dirac operators.

\begin{cor}
Let $M$ be a (possibly incomplete) Riemannian manifold.
Let $D$ be a generalized Dirac operator on $M$ whose curvature endomorphism
satisfies $\lim_{x\to\infty}\K(x)=\infty$.

Then the spectrum of $D^2$ is discrete,
$$
\sigma(D^2) = \sd(D^2).
$$
\end{cor}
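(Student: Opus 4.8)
The plan is to show that the essential spectrum $\se(D^2)$ is empty; since by definition $\sigma(D^2)$ is the disjoint union of $\sd(D^2)$ and $\se(D^2)$, this immediately yields $\sigma(D^2) = \sd(D^2)$. So the entire task reduces to ruling out any essential spectrum.

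The key observation I would exploit is that the hypothesis $\lim_{x\to\infty}\K(x) = \infty$ is strictly stronger than the hypothesis of Theorem~\ref{friedrich-ess}. Indeed, by the definition given in the text, for every positive constant $\kappa$ there is a compact set $K\subset M$ with $\K(x) \geq \kappa$ on $M \setminus K$; in particular $\liminf_{x\to\infty}\K(x) \geq \kappa$ holds for each $\kappa > 0$. I would therefore apply Theorem~\ref{friedrich-ess} once for every fixed $\kappa$, obtaining
$$
\se(D^2) \subset \left[\frac{n\kappa}{n-1}, \infty\right)
$$
for all positive $\kappa$. Note that no completeness assumption enters, in keeping with the statement.

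Finally I would let $\kappa \to \infty$ and intersect these half-lines. Since $\frac{n\kappa}{n-1} \to \infty$, we have $\bigcap_{\kappa > 0}\left[\frac{n\kappa}{n-1}, \infty\right) = \emptyset$, and hence $\se(D^2) = \emptyset$. This completes the argument.

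There is no substantial obstacle here: the corollary is an essentially formal consequence of Theorem~\ref{friedrich-ess} combined with the fact that $\lim\K = \infty$ forces the required $\liminf$ bound for arbitrarily large thresholds $\kappa$. The only point deserving a moment's care is checking that the definition of $\lim_{x\to\infty}\K(x) = \infty$ does supply the hypothesis of Theorem~\ref{friedrich-ess} for every $\kappa > 0$, which it does directly.
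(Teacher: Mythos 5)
Your proposal is correct and is exactly the paper's argument: the hypothesis $\lim_{x\to\infty}\K(x)=\infty$ gives $\liminf_{x\to\infty}\K(x)\geq\kappa$ for every $\kappa>0$, so Theorem~\ref{friedrich-ess} forces $\se(D^2)\subset\bigl[\frac{n\kappa}{n-1},\infty\bigr)$ for all $\kappa$ and hence $\se(D^2)=\emptyset$. The paper states this in one line; you have merely spelled out the same steps.
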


\begin{proof}
Since $\liminf_{x\to\infty}\K(x) \geq \kappa$ for any $\kappa$, Theorem~\ref{friedrich-ess} shows that $\se(D^2)=\emptyset$.
\end{proof}

\begin{cor}\label{cor:sigmaDdiscrete}
Let $M$ be a complete Riemannian manifold.
Let $D$ be a generalized Dirac operator on $M$ whose curvature endomorphism
satisfies $\lim_{x\to\infty}\K(x)=\infty$.

Then the spectrum of $D$ is discrete,
$$
\sigma(D) = \sd(D).
$$
\end{cor}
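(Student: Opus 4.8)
The plan is to derive the discreteness of $\sigma(D)$ from that of $\sigma(D^2)$, which is exactly the content of the preceding corollary. The bridge is the spectral theorem together with the facts, recorded in Section~\ref{sec:general}, that on a complete manifold $D$ is essentially self-adjoint and its self-adjoint closure $\bar D$ satisfies $\bar D^2 = D^2$, where $D^2$ denotes the Friedrichs extension. Consequently the functional calculus for $\bar D$ applies to the function $t\mapsto t^2$ and yields the spectral mapping
$$
\sigma(D^2) = \{\lambda^2 : \lambda\in\sigma(D)\},
$$
the set on the right being automatically closed because squaring is a proper map of $\RR$ onto $[0,\infty)$.

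First I would invoke the previous corollary to obtain $\se(D^2)=\emptyset$, so that $D^2$ has a complete orthonormal system of eigensections with eigenvalues $0\leq\mu_0\leq\mu_1\leq\cdots$ of finite multiplicity accumulating only at $+\infty$. Since $\bar D$ commutes with $D^2=\bar D^2$ and leaves each finite-dimensional eigenspace $\ker(D^2-\mu_j)$ invariant, I can diagonalize the self-adjoint restriction of $\bar D$ to each such eigenspace. This refines the eigenbasis to a complete orthonormal system of common eigensections of $\bar D$ and $D^2$, on which $\bar D$ acts with eigenvalues $\pm\sqrt{\mu_j}$.

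It then remains to read off discreteness of $\sigma(D)$ from this eigenbasis. Each eigenvalue of $\bar D$ lies in $\{\pm\sqrt{\mu_j}\}$ with multiplicity bounded by $\dim\ker(D^2-\mu_j)<\infty$, so every eigenvalue of $\bar D$ has finite multiplicity. Moreover the numbers $\pm\sqrt{\mu_j}$ accumulate only at $\pm\infty$ because the $\mu_j$ accumulate only at $+\infty$; hence $\sigma(D)=\{\pm\sqrt{\mu_j}\}$ has no finite accumulation point and every spectral value is an isolated eigenvalue of finite multiplicity. This is precisely the assertion $\sigma(D)=\sd(D)$.

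The one point demanding care---and the only place where the two operators genuinely differ---is the two-to-one folding of the spectrum under $\lambda\mapsto\lambda^2$: a single eigenvalue $\mu_j>0$ of $D^2$ may split into the two eigenvalues $\pm\sqrt{\mu_j}$ of $\bar D$, while $\mu_j=0$ produces the single eigenvalue $0$. I expect this to be the main (though minor) obstacle, since one must check that isolation and finite multiplicity transfer back through this folding rather than being lost. In fact the construction of a common orthonormal eigenbasis disposes of it automatically, so no genuine difficulty remains beyond the discreteness of $\sigma(D^2)$ already established.
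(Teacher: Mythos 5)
Your argument is correct and matches what the paper intends: the paper states this corollary without proof, relying precisely on the preceding corollary ($\se(D^2)=\emptyset$) together with the fact, recorded in Section~\ref{sec:general}, that on a complete manifold the Friedrichs extension of $D^2$ is the square of the self-adjoint closure of $D$, so the spectral theorem transfers discreteness. Your careful handling of the two-to-one folding via a common eigenbasis is a valid way to make that transfer explicit.
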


This corollary applies in particular when $M$ is compact.
In this case it is well-known of course.
In \cite[Thm.~1]{Baer2000} it is shown that the essential spectrum of the classical Dirac operator on a complete hyperbolic spin manifold of finite volume is either empty or the whole real line.
It is a topological property of the spin structure which determines the essential spectrum.
This discreteness criterion for the Dirac spectrum on hyperbolic manifolds was generalized by S.~Moroianu to manifolds with certain cusp-like ends, see \cite[Thm.~2]{moroianu}. 
There is also an interesting Weyl asymptotics for the growth of the eigenvalues in this case \cite[Thm.~3]{moroianu}.

\section{Surfaces}
\label{sec:surfaces}

In this section we consider connected $2$-dimensional manifolds $M$.
We say that $M$ has genus $0$ if $M$ is diffeomorphic to an open subset of $S^2$.
A spin structure on $M$ is said to be {\em bounding at infinity} if $M$ can be embedded into $S^2$ in such a way that the spin structure extends to the unique spin structure of $S^2$.

A circle $S^1$ has two inequivalent spin structures.
Only one of them extends to the unit disc in $\RR^2$ which is bounded by $S^1$.
We call this the {\em bounding spin structure} of $S^1$.
Sometimes this is called the nontrivial spin structure of $S^1$.

If $K\subset M$ is a compact subset with smooth boundary, then the boundary of $K$ is a disjoint union of circles.
If there is a compact set $K\subset M$ such that $M\setminus K$ is diffeomorphic to the disjoint union of finitely many copies of $\RR\times S^1$, then we say that $M$ has finitely many ends.
In this case a spin structure on $M$ is bounding at infinity if and only if the induced spin structure on the circles $\{t_0\}\times S^1$ is bounding for each end.

\begin{center}
\includegraphics[width=50mm]{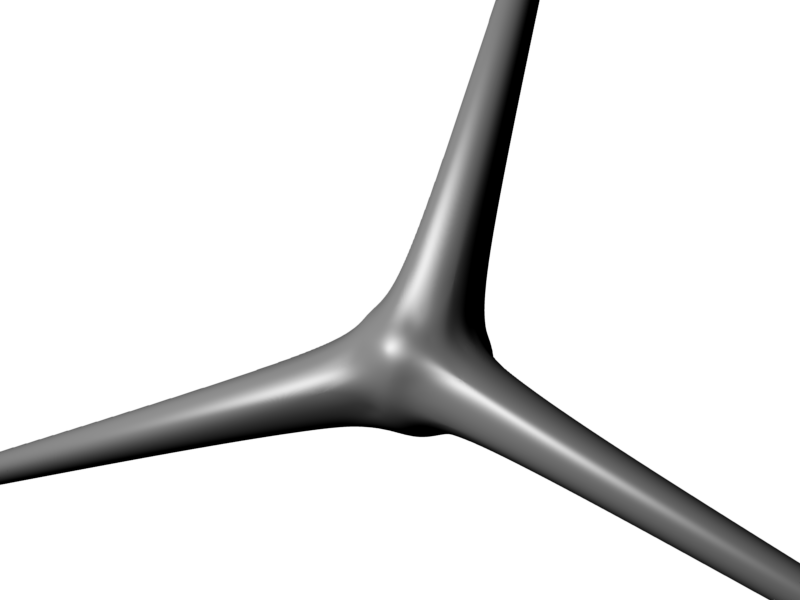}

A surface of genus $0$ with $3$ ends

{\em Fig.~2}
\end{center}

If $M$ is of genus $0$ and has one end, i.~e.\ $M$ is diffeomorphic to $\RR^2$, then its unique spin structure is bounding at infinity.
If $M$ is of genus $0$ and has two ends, i.~e.\ $M$ is diffeomorphic to $S^1\times \RR$, then $M$ has two spin structures, one of which is bounding at infinity while the other is not.

\begin{thm}\label{baer}
Let $M$ be a connected $2$-dimensional Riemannian manifold of genus $0$ and with finite area.
Let $M$ be equipped with a spin structure which is bounding at infinity.
Let $D$ be the classical Dirac operator acting on spinors on $M$.
Then
$$
\lambda_*(D^2) \geq \frac{4\pi}{\area(M)}
$$
\end{thm}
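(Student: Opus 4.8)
The plan is to reduce the estimate to the closed sphere, where the inequality $\lambda_*(D^2)\ge 4\pi/\area(M)$ was established by the author for an arbitrary metric on a genus~$0$ closed surface (this is the result I will assume), and then to exploit the variational characterization of $\lambda_*(D^2)$ together with an area-control argument. Since $M$ has genus $0$ and the spin structure is bounding at infinity, I would first fix, once and for all, an embedding of $M$ onto an open subset of $S^2$ such that the spin structure of $M$ extends to the unique spin structure of $S^2$. From then on I regard $M$ as an open subset of $S^2$ carrying the abstract metric $g$, which a priori bears no relation to any round metric near the topological boundary of $M$ in $S^2$.

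Next I would argue one test spinor at a time. Since $\lambda_*(D^2)=\inf\{\|D\phi\|^2/\|\phi\|^2:\phi\in C_c^\infty(M,E),\ \phi\neq 0\}$, it suffices to bound $\|D\phi\|^2/\|\phi\|^2$ from below by $4\pi/\area(M)$ for each fixed $\phi$. Fix such a $\phi$ and enlarge its support to a compact set $L\Subset M$ with smooth boundary and $\supp\phi\subset\operatorname{int}(L)$. The heart of the argument is then the following construction: for every $\epsilon>0$ produce a smooth Riemannian metric $\bar g_\epsilon$ on all of $S^2$ that agrees with $g$ on a neighborhood of $\supp\phi$ and satisfies $\area(S^2,\bar g_\epsilon)\le \area(M,g)+\epsilon$.

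Granting this, I extend $\phi$ by zero to a smooth spinor on $S^2$, which is legitimate because $\supp\phi$ is compactly contained in the open set $M$. As $\bar g_\epsilon=g$ near $\supp\phi$ and the spin structures agree there, the spinor bundles and Dirac operators coincide on that neighborhood, while $\phi$ and $D\phi$ vanish outside it; hence the Rayleigh quotient is unchanged, $\|D_M\phi\|^2/\|\phi\|^2=\|D_{\bar g_\epsilon}\phi\|^2/\|\phi\|^2\ge \lambda_*(D_{\bar g_\epsilon}^2)$. The known estimate on the closed sphere gives $\lambda_*(D_{\bar g_\epsilon}^2)\ge 4\pi/\area(S^2,\bar g_\epsilon)\ge 4\pi/(\area(M)+\epsilon)$. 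Letting $\epsilon\searrow 0$ yields $\|D_M\phi\|^2/\|\phi\|^2\ge 4\pi/\area(M)$ for the fixed $\phi$, and taking the infimum over all $\phi$ proves the theorem.

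I expect the only genuinely technical point to be the metric extension with small excess area. Each boundary circle of $L$ must be capped off inside $S^2\setminus\operatorname{int}(L)$ by a smooth metric that glues to $g$ along $\partial L$ and contributes arbitrarily little area. The key observation that makes this tractable is that, for a \emph{fixed} test spinor, $\supp\phi$ is compact and has positive distance to $S^2\setminus M$, so $L$ can be chosen with only finitely many smooth boundary circles; consequently the possibility of infinitely many ends plays no role and one caps off finitely many circles. Each cap can then be taken thin and short, so that its area is as small as desired while the metric stays smooth and positive definite. Conceptually this is just the statement that a metric prescribed on a compact, smoothly bounded planar domain extends to $S^2$ with arbitrarily small additional area, but carrying it out carefully—matching the given boundary data and closing up to an interior point—is where the real work lies.
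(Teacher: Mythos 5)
Your proposal is correct and follows essentially the same route as the paper: embed $M$ into $S^2$ so that the spin structure extends, work with one compactly supported test spinor at a time, extend the metric from a neighborhood of its support to all of $S^2$ with controlled total area, and invoke the known estimate on the closed sphere. The only cosmetic difference is that you allow an $\epsilon$ of excess area and pass to the limit, whereas the paper arranges $\area(S^2,g)\le\area(M)$ directly; both versions yield the theorem.
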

Note that $M$ need not be complete and may have infinitely many ends.
In case $M$ is diffeomorphic to $S^2$ this inequality was conjectured in \cite{lott0} and first shown by the author in \cite[Satz~1.10 on p.~93]{Diss}, see also \cite[Thm.~2]{Baer92}.

\begin{proof}
Let $M$ be as in the statement of the theorem and let $\phi\in C^\infty_c(M,\Sigma M)$.
We embed $M$ into $S^2$ such that the spin structure extends.
We choose a Riemannian metric $g$ on $S^2$ such that the embedding is isometric on the compact set $K:=\supp(\phi)$ and such that $\area(S^2,g)\leq\area(M)$.
Since $\phi$ vanishes outside $K$ we can regard $\phi$ as a smooth spinor on $S^2$.
Therefore by the inequality in the compact case
$$
\frac{(D\phi,D\phi)}{(\phi,\phi)} \geq \lambda_*(D^2,S^2,g) \geq \frac{4\pi}{\area(S^2,g)} \geq \frac{4\pi}{\area(M)}
$$
and the theorem follows.
\end{proof}

Since the spin structure on a surface of genus $0$ with exactly one end is automatically bounding at infinity we get

\begin{cor}
Let $M$ be a connected $2$-dimensional Riemannian manifold of genus $0$, with exactly one end, and with finite area.
Let $D$ be the classical Dirac operator acting on spinors on $M$.
Then
$$
\lambda_*(D^2) \geq \frac{4\pi}{\area(M)} .
$$
\end{cor}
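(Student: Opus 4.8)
The plan is to deduce this directly from Theorem~\ref{baer} by checking that its hypotheses are automatically met once we know $M$ has genus $0$ and a single end. The analytic substance is entirely carried by Theorem~\ref{baer}; what remains is a purely topological verification about the spin structure.

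First I would pin down the diffeomorphism type. Genus $0$ means $M$ embeds as an open subset of $S^2$, and having exactly one end means $M\setminus K \cong \RR\times S^1$ for some compact $K$ with smooth boundary. Capping off the single boundary circle by the compact genus-$0$ piece $K$, which is a disc, shows $M$ is diffeomorphic to $\RR^2$, as already recorded in the discussion preceding Theorem~\ref{baer}.

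Next I would observe that $\RR^2$ is simply connected, so it admits a unique spin structure; there is thus nothing to assume and no choice to make. Finally I would verify that this spin structure is bounding at infinity. Embedding $M\cong\RR^2$ into $S^2$ as the complement of a point, the spin structure on $M$ is the restriction of the unique spin structure on $S^2$, since $S^2$ is simply connected; equivalently, the end circle $\{t_0\}\times S^1$ bounds a disc inside $M$, so the induced spin structure on it is the bounding one. Hence the spin structure is bounding at infinity, and Theorem~\ref{baer} applies verbatim to give $\lambda_*(D^2)\geq \frac{4\pi}{\area(M)}$.

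The only possible obstacle is keeping the spin-structure definitions straight, since one must confirm that \emph{bounding at infinity} genuinely holds rather than merely looking plausible. But because $\RR^2$ and $S^2$ are both simply connected, this reduces to the single clean fact that the unique spin structure on the disc restricts to the bounding spin structure on its boundary circle, so no further work is needed.
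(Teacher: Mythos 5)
Your proposal is correct and follows exactly the paper's route: the corollary is deduced from Theorem~\ref{baer} by noting that a genus-$0$ surface with one end is diffeomorphic to $\RR^2$, whose unique spin structure is automatically bounding at infinity. The paper states this in one line; your additional verification of the topological facts is accurate but adds nothing beyond what the paper already records in the discussion preceding Theorem~\ref{baer}.
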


This has first been shown by N.~Gro\ss e using different methods in \cite[Thm.~3.1.7]{grosse}.

\begin{remark}
The assumption on the spin structure in Theorem~\ref{baer} cannot be dispensed with.
For example, let $M = S^1 \times (0,L)$ with the product metric $d\theta^2 + dt^2$ such that $S^1$ has length $2\pi$.
The two spin structures on $S^1$ induce two spin structures on $M$.
We pick the spin structure coming from the non-bounding spin structure on $S^1$.
With respect to this spin structure $M$ carries a non-trivial parallel spinor $\psi$.
Put
\begin{equation}\label{eq:defphi}
\phi(\theta,t) := \sin(\pi t/L)\psi(\theta,t) .
\end{equation}
Then $\phi$ is smooth and satisfies Dirichlet boundary conditions on $\overline M = S^1 \times [0,L]$.
Hence $\phi$ is in the domain of the Friedrichs extension of $D^2$.
From
$$
D^2\phi = \frac{\pi^2}{L^2}\phi
$$
we see that
$$
\lambda_*(D^2) 
\leq 
\frac{(D^2\phi,\phi)}{(\phi,\phi)}
=
\frac{\pi^2}{L^2}
<
\frac{4\pi}{\area(M)}
$$
if $L > \frac{\pi^2}{2}$.

This example is an open cylinder and hence incomplete.
It is easy to modify this example such that it becomes complete.
To do this we attach cusps of finite area $A$ to both boundary components of $S^1\times [0,L]$.
Then $\area(M) = 2\pi L + 2A$.

\begin{center}
\includegraphics[height=20mm]{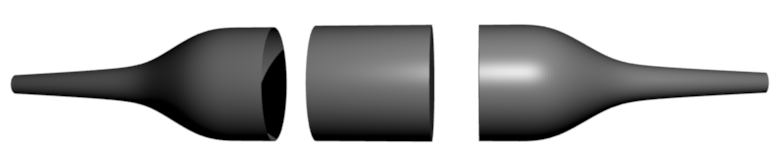}

\vspace{-5mm}
cusp \hspace{12mm} $S^1\times L$ \hspace{12mm} cusp

\includegraphics[height=23mm]{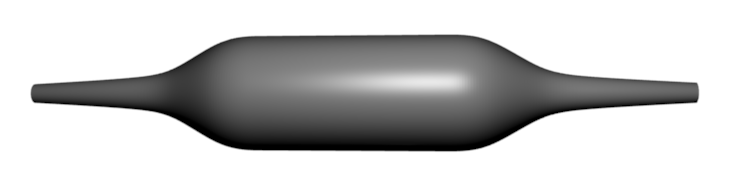}

\vspace{-8mm}
$M$

{\em Fig.~3}
\end{center}

Choose $\phi$ as in \eqref{eq:defphi} on $S^1\times [0,L]$ and extend it by zero to all of $M$.
Then $\phi$ does not lie in the domain of the Friedrichs extension of $D^2$ but it does lie in the domain $H^1_D(M,\Sigma M)$ of the unique self-adjoint extension of $D$ and 
$$
\lambda_*(D^2) 
\leq 
\frac{(D\phi,D\phi)}{(\phi,\phi)}
=
\frac{\pi^2}{L^2}
<
\frac{4\pi}{2\pi L + 2A}
=
\frac{4\pi}{\area(M)}
$$
if $L\gg 0$.
\end{remark}

\begin{remark}
In the terminology of \cite{Baer2000} a spin structure bounding at infinity is a spin structure which is nontrivial along all cusps.
Theorem~1 in \cite{Baer2000} says that the spectrum of the classical Dirac operator on a complete hyperbolic spin manifold with finite volume is discrete if and only if the spin structure is nontrivial along all cusps.

In \cite{mo-mo} one finds criteria for the discrete spectrum of the classical Dirac operator to be empty.
\end{remark}

\section{appendix}

It remains to provide a proof of the technial Lemma~\ref{nablaL2}.

\begin{proof}[Proof of Lemma~\ref{nablaL2}]
Since $\phi$ and $D\phi$ are $L^2$ we have that $\phi$ lies in the
domain of the maximal extension of $D$.
Since $M$ is complete $D$ is essentially self-adjoint, hence the maximal and the minimal
extensions coincide, so there exist $\phi_k \in C_c^\infty(M,E)$
such that $\phi_k \to \phi$ and $D\phi_k \to D\phi$ in $L^2(M,E)$
for $k\to \infty$.
By \gl{bochnerabsch} $\|\nE\phi_k\|^2 \leq \|D\phi_k\|^2 - \kappa
\|\phi_k\|^2 \to \|D\phi\|^2 - \kappa \|\phi\|^2$.
In particular, there is a constant $C>0$ such that $\|\nE\phi_k\| \leq C$
for all $k$.
For each $\psi\in C_c^\infty(M,TM\otimes E)$ we see that
$$
(\psi,\nE\phi) = ((\nE)^*\psi, \phi) = \lim_{k\to \infty}((\nE)^*\psi, \phi_k)
= \lim_{k\to \infty}(\psi,\nE\phi_k) ,
$$
hence
$$
|(\psi,\nE\phi)| \leq C\, \|\psi\| .
$$
Therefore the linear functional $C_c^\infty(M,TM\otimes E) \to \CC,\, 
\psi \mapsto (\psi,\nE\phi),$ extends to a bounded functional 
$L^2(M,TM\otimes E) \to \CC$.
By the Riesz representation theorem there exists $\chi\in L^2(M,TM\otimes E)$
such that $(\psi,\nE\phi) = (\psi,\chi)$ for all $\psi\in 
C_c^\infty(M,TM\otimes E)$.
Thus $\nE\phi = \chi$ is square-integrable.

For $\rho > 0$ choose a smooth cut-off function $f_\rho : M \to \RR$
with compact support satisfying
\begin{itemize}
\item
$0 \leq f_\rho \leq 1$ on $M$,
\item
$f_\rho \equiv 1$ on the ball of radius $\rho$ about some fixed point,
\item
$|\grad f_\rho| \leq 1/\rho$ on $M$.
\end{itemize}
Then we have
\begin{eqnarray*}
\|\nE(f_\rho \phi) - \nE \phi\| 
&=& 
\| df_\rho \otimes \phi + f_\rho\nE\phi - \nE \phi\| \\
&\leq& 
\| df_\rho\|_{L^\infty} \|\phi\| +  \| f_\rho\nE\phi - \nE \phi\| \\
&\leq& 
\|\phi\|/\rho + \|(f_\rho-1)\nE\phi\| 
\quad \xrightarrow{\rho\to\infty} \quad 0.
\end{eqnarray*}
Using \gl{symbol} we see similarly that $D(f_\rho \phi) \to D\phi$ in
$L^2(M,E)$ as $\rho \to \infty$.
Moreover, splitting $(\K(f_\rho\phi),f_\rho\phi) = \int_M f_\rho^2
\<\K\phi,\phi\> \dV$ into integrals over the subsets of $M$ on which
$\<\K\phi,\phi\>$ is positive and negative respectively and applying
the theorem of monotone convergence we see that $(\K(f_\rho\phi),f_\rho\phi)
\to (\K\phi,\phi)$.
Taking the limit $\rho\to\infty$ in
$$
\|D(f_\rho\phi)\|^2 = \|\nE(f_\rho\phi)\|^2 + (\K(f_\rho\phi),f_\rho\phi)
$$
yields
$$
\|D\phi\|^2 = \|\nE\phi\|^2 + (\K\phi,\phi).
$$
\end{proof}


\begin{thebibliography}{1}

\bibitem{Diss}
C.~B\"ar, \emph{Das Spektrum von Dirac-Operatoren}, Dissertation, Universit\"at Bonn 1990, Bonner Mathematische Schriften~\textbf{217} (1991)

\bibitem{Baer92}
C.~B\"ar, \emph{Lower eigenvalue estimate for Dirac operators}, Math.\ Ann.~\textbf{293} (1992), 39--46

\bibitem{Baer2000}
C.~B\"ar, \emph{The Dirac operator on hyperbolic manifolds of finite volume},
J.~Diff.~Geom.~\textbf{54} (2000), 439--488

\bibitem{BFGK}
H.~Baum, T.~Friedrich, R.~Grunewald, I.~Kath,
  \emph{Twistor and Killing spinors on Riemannian manifolds}, Teubner Verlag, Stuttgart-Leipzig, 1991

\bibitem{friedrich80a}
T.~Friedrich, \emph{Der erste {E}igenwert des {D}irac-{O}perators einer
  kompakten {R}iemannschen {M}annigfaltigkeit nicht-negativer {K}r{\"u}mmung},
  Math.\ Nachr.~\textbf{97} (1980), 117--146

\bibitem{gromov-lawson83a}
M.~Gromov and H.~B. Lawson, \emph{Positive scalar curvature and the {D}irac
  operator on complete {R}iemannian manifolds}, Publ.\ Math.\ Inst.\ Hautes Etud.\
  Sci.~\textbf{58} (1983), 295--408

\bibitem{grosse0}
N.~Gro\ss e, \emph{The Hijazi inequality on conformally parabolic manifolds},
  \texttt{arXiv:0804.3878v2}

\bibitem{grosse}
N.~Gro\ss e, \emph{On a spin conformal invariant on open manifolds},
  Dissertation, Universit\"at Leipzig 2008

\bibitem{Hij0}
O.~Hijazi, \emph{A Conformal Lower Bound for the Smallest Eigenvalue of the Dirac Operator and Killing Spinors},
Commun.\ Math.\ Phys.~\textbf{104} (1986), 151--162

\bibitem{Hij}
O.~Hijazi, \emph{Spectral properties of the Dirac operator and geometrical structures}, Proceedings of the Summer School on Geometric Methods in Quantum Field Theory, Villa de Leyva, Colombia, July 12--30, (1999), World Scientific, 2001

\bibitem{LM}
H.~B.~Lawson, M.-L.~Michelsohn,
\emph{Spin geometry}, Princeton University Press, 1989

\bibitem{Lic}
A.~Lichnerowicz,
\emph{Geometry of groups of transformations}, Noordhoff International Publishing, Leyden, 1977

\bibitem{lott0}
J.~Lott,
\emph{Eigenvalue bounds for the Dirac operator},
Pacific J.\ Math.~\textbf{125} (1986), 117--126

\bibitem{mo-mo}
A.~Moroianu, S.~Moroianu,
\emph{The Dirac spectrum on manifolds with gradient conformal vector fields},
J.\ Funct.\ Anal.~\textbf{253} (2007), 207--219

\bibitem{moroianu}
S.~Moroianu,
\emph{Weyl laws on open manifolds},
Math.\ Ann.~\textbf{340} (2008), 1--21

\bibitem{Roe}
J.~Roe,
\emph{Elliptic operators, topology and asymptotic methods}, second edition, 
Addison Wesley Longman, Harlow, 1998

\bibitem{weidmann00}
J.~Weidmann, \emph{Lineare Operatoren in Hilbertr\"aumen, Teil I Grundlagen},
  Teubner Verlag, Stuttgart-Leipzig-Wiesbaden, 2000

\end{thebibliography}
\end{document}